\numberwithin{equation}{section}
\newtheorem{question}{Question}[section]
\newtheorem{lemma}{Lemma}[section]
\newtheorem{theorem}{Theorem}[section]
\theoremstyle{definition}
\DeclareMathOperator{\dist}{dist}
\begin{document}
\title{ The range of Hardy number on comb domains}

\author{Christina Karafyllia}  
\address{Institute for Mathematical Sciences, Stony Brook University, Stony Brook, NY 11794, U.S.A.}
\email{christina.karafyllia@stonybrook.edu}   

\subjclass[2010]{Primary 30H10, 42B30; Secondary 60J65}

\keywords{Hardy number, Hardy spaces, comb domains, exit time of Brownian motion}

\begin{abstract} Let $D\ne \mathbb{C}$ be a simply connected domain and $f$ be the Riemann mapping from $\mathbb{D}$ onto $D$. The Hardy number of $D$ is the supremum of all $p$ for which $f$ belongs in the Hardy space ${H^p}\left( \mathbb{D} \right)$. A comb domain is the entire plane minus an infinite number of vertical rays symmetric with respect to the real axis. In this paper we prove that for any $p\in [1,+\infty]$, there is a comb domain with Hardy number equal to $p$ and this result is sharp. It is known that the Hardy number is related with the moments of the exit time of Brownian motion from the domain. In particular, our result implies that given $ p < q$  there exists a comb domain with finite $p$-th moment but infinite $q$-th moment if and only if $q\geq 1/2$. This answers a question posed by Boudabra and Markowsky.
\end{abstract}

\maketitle

\section{Introduction}\label{int}

The Hardy space with exponent $p>0$ \cite[pp.\ 1--2]{Dur} is denoted by ${H^p}\left( \mathbb{D} \right)$ and is defined to be the set of all holomorphic functions, $f$, on the unit disk $\mathbb{D}$ that satisfy the condition 
\[\mathop {\sup }\limits_{0 < r < 1} \int_0^{2\pi } {{{| {f( {r{e^{i\theta }}} )} |}^p}d\theta  <  + \infty } .\] 
In \cite{Han1} Hansen studied the problem of determining the numbers $p$ for which $f \in {H^p}\left( \mathbb{D} \right)$ by studying the geometry of $f\left( \mathbb{D} \right)$. For this purpose he introduced a number which he called the Hardy number of a region. In case $D\ne \mathbb{C}$ is a simply connected domain and $f$ is a Riemann mapping from $\mathbb{D}$ onto $D$, the Hardy number of $D$ is defined by 
\[{\rm h}\left( D \right) = \sup \left\{ {p > 0:f \in {H^p}\left( \mathbb{D} \right)} \right\}.\]
We note that this definition is independent of the choice of the Riemann mapping onto $D$. It is known that every conformal mapping on $\mathbb{D}$ belongs to ${H^p}\left( \mathbb{D} \right)$ for all $p \in (0,1/2)$ \cite[p.\ 50]{Dur}. Therefore, ${\rm h}\left( D \right)$ lies in $[1/2, +\infty]$.

Some basic properties of the Hardy number are that it is invariant under mappings of the form $az+b$, where $a,b\in \mathbb{C}$ and $a\ne 0$ and that if $D_1 \subset D_2$ then ${\rm h}(D_1) \ge {\rm h} (D_2)$ (see \cite[pp. 236--237]{Han1}). Moreover, we know that the Hardy number of bounded domains is equal to infinity and we can find its exact value in some specific cases of unbounded domains like strips and sectors, starlike \cite{Han1} and spiral-like regions \cite{Han2}. However, there is no general method for computing the Hardy number of a domain and thus it is a difficult problem to find its exact value. There are only some ways to estimate it for certain types of domains. See, for example, \cite{Han1,Kara,Kar,Kim,Co2} and references therein. 

Comb domains are defined in the following way. Let $\left\{ {x_n } \right\}_{n \in \mathbb{Z}} $ be a strictly increasing sequence of real numbers such that $x_0=0$ and 
\[ \mathop {\inf }\limits_{n \in \mathbb{Z}} (x_{n}  - x_{n-1} )>0.\]
Also, let $\left\{ {b_n } \right\}_{n \in \mathbb{Z}} $ be an associated sequence of positive numbers. A domain of the form 
\[C=\mathbb{C}\backslash \bigcup\limits_{n \in \mathbb{Z}} {\left\{ {x_n  + iy: |y| \ge b_n} \right\}}\] 
is called a comb domain (see Fig.\ \ref{comb}). Comb domains is an interesting class of simply connected domains and thus they have been studied from various points of view. See, for example, \cite{Bou,Kara} and references therein.

\begin{figure}
	\begin{overpic}[width=\linewidth]{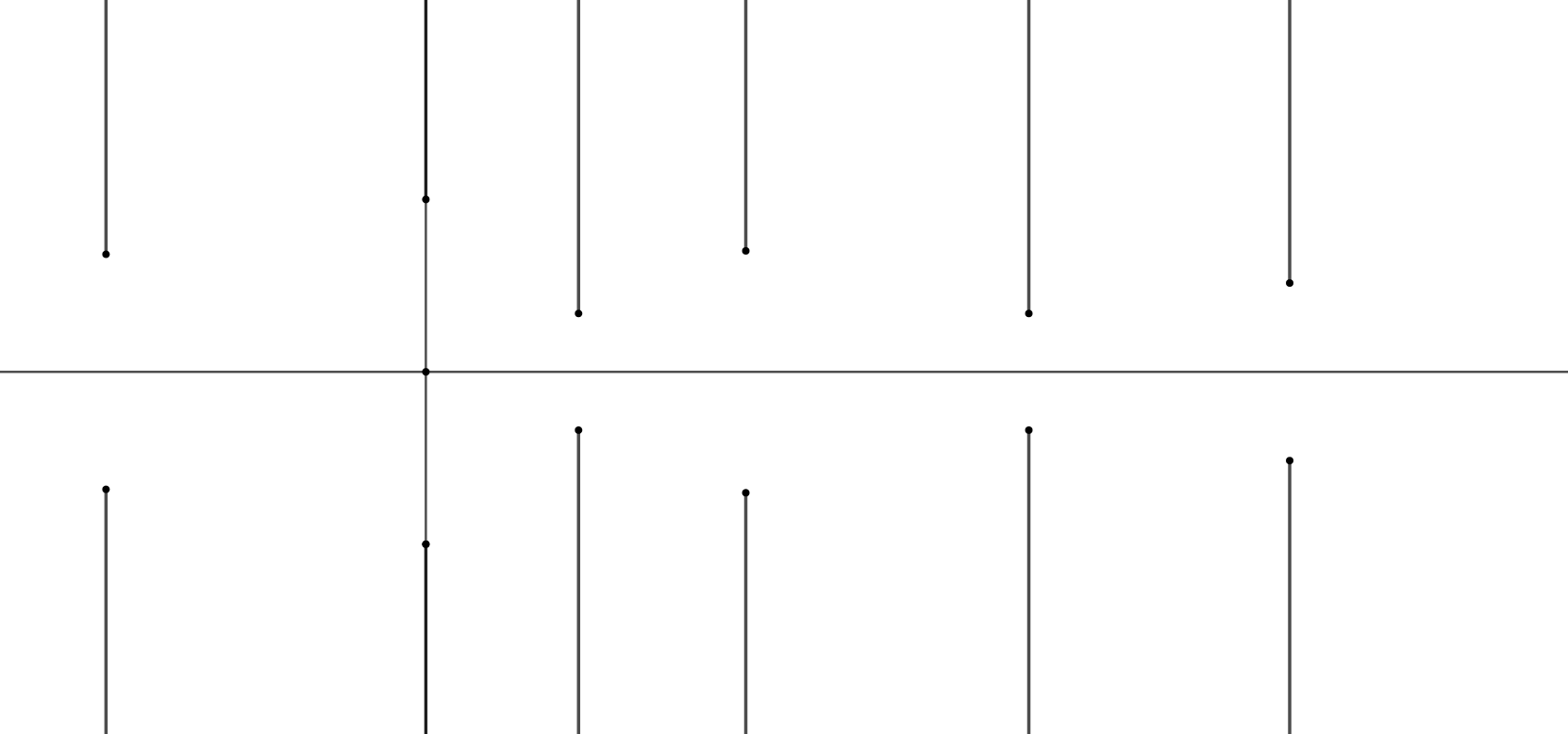}
		\put (28,20) {$0$}
		\put (17,33) {$(x_0,b_0)$}
		\put (15,11) {$(x_0,-b_0)$}
		\put (37.5,26) {$(x_1,b_1)$}
		\put (37.5,19) {$(x_1,-b_1)$}
		\put (48.5,30) {$(x_2,b_2)$}
		\put (66.5,26) {$(x_3,b_3)$}
		\put (83,27.5) {$(x_4,b_4)$}
		\put (83,16.5) {$(x_4,-b_4)$}
		\put (66.5,18) {$(x_3,-b_3)$}
		\put (48.5,14) {$(x_2,-b_2)$}
		\put (1,27) {$(x_{-1},b_{-1})$}
		\put (1,17) {$(x_{-1},-b_{-1})$}		
	\end{overpic}
	\caption{Comb domain.}
	\label{comb}	
\end{figure}

In \cite{Bou} Boudabra and Markowsky studied the finiteness of moments of the exit time of planar Brownian motion from comb domains and posed the following question.

\begin{question}\label{que1}
Given $p < q$, can we construct a comb domain with finite $p$-th moment but infinite $q$-th moment? 
\end{question}
 
The moment of the exit time of Brownian motion from a domain $D$ is related with the Hardy number of $D$. In \cite{Bur} Burkholder proved that finiteness of the $p$-th Hardy norm of  a domain $D$ is equivalent to finiteness of the $p/2$-th moment of the exit time of a planar Brownian motion from $D$. More precisely, if $D$ is a simply connected domain, then we define the number ${\widetilde h}(D)$ to be the supremum of all $p>0$ for which the $p$-th moment of the exit time of Brownian motion is finite. Then Burkholder proved in \cite{Bur} that 
\begin{equation}\nonumber
{\widetilde h}(D)={\rm h}(D)/2.
\end{equation}
Therefore, in order to answer Question \ref{que1} it suffices to answer the following stronger question.

\begin{question}
Given $p\ge 1/2$, can we construct a comb domain with Hardy number equal to $p$? 
\end{question}

In this paper we answer this question. First, we observe that the Hardy number of comb domains is always greater than or equal to 1. Therefore, if $p$ lies in $[1/2,1)$, there is no comb domain with Hardy number equal to $p$. However, if $p$ is any number in $[1,+\infty]$, we can construct a comb domain with Hardy number equal to $p$. 

\begin{theorem}\label{main}
If $C$ is a comb domain, then ${\rm h}(C)\geq 1$. Conversely, for any $p \in [1,+\infty]$, there is a comb domain $C$ with ${\rm h}(C)=p$. 
\end{theorem}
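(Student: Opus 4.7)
The plan is to prove the two assertions of Theorem~\ref{main} independently.

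For the lower bound $h(C) \geq 1$, I would invoke Burkholder's identity ${\widetilde h}(C) = h(C)/2$ recalled in the introduction; this reduces the task to showing that $E_0[\tau^{q}] < +\infty$ for every $q < 1/2$, where $\tau$ is the exit time of planar Brownian motion $B_t = (X_t, Y_t)$ from $C$ started at $0$. The decisive feature is the product structure: the real part $X_t$ is itself a one-dimensional Brownian motion, and the exit condition for $B_t$ is precisely that $X_t \in \{x_n\}$ with $|Y_t| \geq b_n$. Let $(\sigma_k)$ denote the successive transition times of $X_t$ between neighbouring points of $\{x_n\}_{n \in \mathbb{Z}}$, and write $X_{\sigma_k} = x_{N_k}$. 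By continuity of $X_t$, the sequence $N_k$ is a nearest-neighbour random walk on $\mathbb{Z}$ whose step probabilities depend on adjacent spacings, and each increment $\sigma_{k+1}-\sigma_k$ is the exit time of a 1D Brownian motion from the interval $(x_{N_k - 1}, x_{N_k+1})$. Combining the subadditivity $\tau^{1/2}\leq \sum_{k=1}^{K}(\sigma_k - \sigma_{k-1})^{1/2}$ with the hypothesis $\delta := \inf_n(x_n - x_{n-1}) > 0$ and a tail bound on the number $K$ of visits until the actual exit --- obtained from the fact that $|Y_{\sigma_k}|$ is Gaussian of variance $\sigma_k$ growing without bound --- should produce $E_0[\tau^{1/2}] < +\infty$ and hence $h(C) \geq 1$.

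For the construction realizing each $p \in [1,+\infty]$, the plan is to tune the two sequences $\{x_n\}$ and $\{b_n\}$ and verify the resulting Hardy number via the classical reformulation, valid for the Riemann map $f$ with $f(0)=0$,
\[
h(C) = \sup\!\Big\{p > 0 : \int_1^{+\infty} t^{\,p-1}\,\omega_C\bigl(0,\{z \in \partial C : |z| > t\}\bigr)\, dt < +\infty\Big\}.
\]
Two anchor constructions drive the argument: a uniform comb $x_n=n$, $b_n\equiv 1$, whose escape behaviour at infinity mimics a half-plane and should give $h=1$; and a sharply shrinking comb in which $b_n \to 0$ sufficiently rapidly (say super-exponentially) so that Brownian motion is trapped in a progressively narrow neighbourhood of the real axis, rendering $\{|z|>t\}$ inaccessible faster than any polynomial and yielding $h=+\infty$. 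Intermediate values $p\in(1,+\infty)$ should be reachable by interpolating between these two extremes --- for instance by letting $b_n$ decay like $|x_n|^{-\alpha}$, or by letting the spacings $x_n-x_{n-1}$ grow polynomially --- with the exponent chosen so that the integrand above is borderline summable exactly at exponent $p$.

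The main obstacle is the intermediate regime $p \in (1,+\infty)$: certifying the exact Hardy number requires \emph{matching} upper and lower bounds on $\omega_C(0,\{|z|>t\})$, since any gap leaves $h(C)$ undetermined. The upper bound is typically amenable to comparison with a half-plane via a Beurling-type projection argument, but the matching lower bound usually requires a direct probabilistic computation of the tail of $X_\tau = x_N$. I would realize the projected motion between successive exit attempts as a random walk on $\{x_n\}$, with step probabilities $(x_n - x_{n-1})/(x_{n+1}-x_{n-1})$ governed by the spacings and a geometric-type lifetime governed by the vertical dynamics, and then read off the tail of $x_N$ to verify that the decay of the harmonic measure is precisely of order $t^{-p}$.
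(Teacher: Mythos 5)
Your proposal has genuine gaps, and one of its anchor claims is outright false. The uniform comb $x_n=n$, $b_n\equiv 1$ does \emph{not} have Hardy number $1$: every path to infinity in that domain runs through channels of bounded width (the horizontal strip $|y|<1$ or the vertical strips $n<x<n+1$), so it behaves like a half-strip, and its Hardy number is $+\infty$ --- this is exactly the paper's Case~3, quoting \cite{Kara}. To realize $p=1$ you must do the opposite of what you suggest: let $b_n\to\infty$ rapidly (the paper takes $b_n=n^2+1$), so that $C$ contains sectors of opening arbitrarily close to $\pi$ and hence ${\rm h}(C)\le \pi/(\pi-\varepsilon)$ for every $\varepsilon$. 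Likewise your proposed interpolation via decaying $b_n\sim|x_n|^{-\alpha}$ pushes the domain toward the constricted ($h=+\infty$) regime rather than sweeping out $(1,\infty)$; the construction that actually works is $b_n=|n|\tan(\theta/2)$, which sandwiches $C$ around the sector $S_\theta$ and gives the upper bound ${\rm h}(C)\le\pi/\theta$ for free from $S_\theta\subset C$. You correctly identify that the hard part is the matching lower bound on the decay of $\omega_C(0,\{|z|>t\})$, but you do not supply it; the paper gets it by combining the hyperbolic-distance characterization of ${\rm h}$ (Theorem~\ref{hardy}), the inequality $\omega_D(0,E)\ge \tfrac{2}{\pi}e^{-d_D(0,E)}$, and Beurling's integral estimate, together with two geometric lemmas showing the angular width of the separating arcs tends to $\theta$.

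For the first assertion, your probabilistic route is both riskier and gappier than needed. The tail bound on the number $K$ of tooth-crossings before exit is not established, and it cannot be uniform in the comb: the $b_n$ may grow arbitrarily fast, making each exit attempt arbitrarily unlikely. More seriously, your stated target $E_0[\tau^{1/2}]<+\infty$ is false for some comb domains --- the paper's own remark notes that for the comb with $b_n=n^2+1$ the Riemann map is sector-like and does \emph{not} lie in $H^1(\mathbb{D})$, which by Burkholder is precisely $E_0[\tau^{1/2}]=+\infty$. You only need $E_0[\tau^{q}]<\infty$ for all $q<1/2$, and even that is obtained far more cheaply by monotonicity: every comb domain satisfies $C\subset \mathbb{C}\setminus\{iy:|y|\ge b_0\}$, the latter is starlike with $\alpha_S(t)\equiv\pi$, so Hansen's formula \eqref{starlike} gives it Hardy number $1$, and domain monotonicity of the Hardy number yields ${\rm h}(C)\ge 1$ in one line.
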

The idea is first to consider a sector with angular opening $\theta \in (0,\pi)$. We know that its Hardy number is equal to $\pi/\theta \in (1,\infty)$ \cite[p. 237]{Han1}. Then we construct a comb domain that resembles the sector and has Hardy number equal to the Hardy number of this sector. In this way we cover the cases in which $p\in (1,+\infty)$. The cases $p=1$ and $p=+\infty$ are studied separately.

\section{Preliminary results}\label{pre}

Before we prove Theorem \ref{main}, we state some results that we are going to use.

\subsection{Hardy number of starlike domains}

In \cite[p. 237]{Han1} Hansen proved that we can determine the Hardy number of a starlike domain in the following way. Let $S \ne \mathbb{C}$ be an unbounded starlike domain with respect to $0$. For $t>0$ we define
\[\alpha_S (t)=\max \{m(E):E\,\,\, \text{is} \,\,\, \text{a}\,\,\, \text{subarc}\,\,\, \text{of}\,\,\, S\cap \{|z|=t\} \},\]
where $m(E)$ denotes the angular Lebesgue measure of $E$. Then
\begin{equation}\label{starlike}
{\rm h}(S)=\lim_{t \to  + \infty } \frac{\pi}{\alpha_S (t)}.
\end{equation}

\subsection{Hardy number and hyperbolic distance}

In \cite{Kar} the current author proves that the Hardy number of a simply connected domain can be computed with the aid of the hyperbolic distance in the following way.

\begin{theorem}\label{hardy} Let $D$ be a simply connected domain with $0 \in D$. If ${F_r } =D\cap \{|z|=r\}$ for $r >0$, then
	\[{\rm h}\left( D \right)=\mathop {\liminf}\limits_{r \to  + \infty } \frac{{{d_D}\left( {0,F_r} \right)}}{{\log r }}.\]
\end{theorem}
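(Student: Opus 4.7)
The plan is to prove both inequalities ${\rm h}(D) \leq \alpha$ and ${\rm h}(D) \geq \alpha$, where $\alpha := \liminf_{r \to +\infty} d_D(0, F_r)/\log r$. Let $f$ be the Riemann map from $\mathbb{D}$ onto $D$ with $f(0)=0$; conformal invariance of the hyperbolic metric gives $d_D(0, f(w)) = d_\mathbb{D}(0, w) = \log\frac{1+|w|}{1-|w|}$ for every $w \in \mathbb{D}$, and this identity is the bridge between analytic information on $f$ and geometric information on $F_r$.

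For the upper bound ${\rm h}(D) \leq \alpha$, I fix $p < {\rm h}(D)$, so $f \in H^p(\mathbb{D})$, and invoke the standard pointwise Hardy-space estimate $|f(w)| \leq C_p(1-|w|)^{-1/p}$, a consequence of applying the sub-mean-value inequality to the subharmonic function $|f|^p$. If $z \in F_r$ and $w = f^{-1}(z)$, this yields $1-|w| \leq C'_p r^{-p}$, hence $d_D(0, z) \geq -\log(1-|w|) \geq p\log r - O(1)$. Taking the infimum over $z \in F_r$, dividing by $\log r$, and letting $r \to \infty$ gives $\alpha \geq p$; then letting $p \uparrow {\rm h}(D)$ closes this half.

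For the lower bound ${\rm h}(D) \geq \alpha$, fix any $p < q < \alpha$; the definition of $\alpha$ yields some $R_0$ with $d_D(0, F_r) \geq q\log r$ for all $r \geq R_0$. To show $f \in H^p$, I will use the layer-cake representation
\[
\int_0^{2\pi}|f^*(e^{i\theta})|^p\,d\theta \;=\; p\int_0^{\infty}t^{p-1}\,m(t)\,dt, \qquad m(t) := \bigl|\{\theta : |f^*(e^{i\theta})|>t\}\bigr|,
\]
together with the conformal-invariance identity $m(t)/(2\pi) = \omega_D\bigl(0,\partial D \cap \{|z|>t\}\bigr)$. The crux is the Beurling-type bound
\[
\omega_D\bigl(0,\partial D \cap \{|z|>t\}\bigr) \;\leq\; C\,e^{-d_D(0, F_t)}
\]
valid once $\partial D \cap \{|z| \leq t\} \neq \emptyset$: a Brownian path from $0$ must cross the arc system $F_t$ before exiting at $\{|z|>t\}$, and Beurling's projection theorem applied to the conformal map of the component of $D \cap \{|z|<t\}$ containing $0$ onto $\mathbb{D}$ (see Pommerenke's \emph{Boundary Behaviour of Conformal Maps}, Ch.~4, or Garnett--Marshall) controls this crossing probability by $d_D(0, F_t)$. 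Combining, $m(t) \leq 2\pi C\,t^{-q}$ for $t \geq R_0$, while trivially $m(t) \leq 2\pi$ for $t < R_0$, so the tail integral $\int_{R_0}^{\infty}t^{p-1-q}\,dt$ converges. Hence $f \in H^p$, so ${\rm h}(D) \geq p$, and letting $p \uparrow \alpha$ finishes the proof.

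The main obstacle is this Beurling-type harmonic-measure estimate. The delicate point is that $F_t$ may consist of several arcs and sits as an \emph{interior} set of $D$, so the hyperbolic distance $d_D(0, F_t)$ (computed in $D$) is finite while the distance in the subdomain $D \cap \{|z|<t\}$ would be infinite; carefully setting up the geometry and invoking Beurling's theorem in the conformally correct setting is what makes the exponential bound work, and it is the only genuinely non-elementary ingredient in the argument.
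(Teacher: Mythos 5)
A preliminary remark: the paper does not actually prove this statement; it quotes it from the author's earlier work (reference [Kar], Ark.\ Mat.\ 58 (2020)), so there is no in-paper proof to compare against line by line. Judged on its own terms, your first half (the inequality ${\rm h}(D)\le\liminf_r d_D(0,F_r)/\log r$ via the pointwise bound $|f(w)|\le C_p(1-|w|)^{-1/p}$ for $f\in H^p$) is correct and is the standard argument. The second half, however, has a genuine gap: the ``Beurling-type bound'' $\omega_D\bigl(0,\partial D\cap\{|z|>t\}\bigr)\le C e^{-d_D(0,F_t)}$ that you call the crux is false in general. Beurling's projection theorem produces \emph{lower} bounds for harmonic measure in terms of hyperbolic distance --- that is exactly inequality (2.2) of this paper, $\omega_D(0,E)\ge\frac{2}{\pi}e^{-d_D(0,E)}$ --- and it cannot be turned around. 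Concretely, take $D=\{|z|<1+\delta\}\cup T$ with $T$ a thin corridor running out to infinity, and $t=1$: then $F_1$ is the entire unit circle, $d_D(0,F_1)\gtrsim\log(1/\delta)\to+\infty$ as $\delta\to0$, yet $\omega_D\bigl(0,\partial D\cap\{|z|>1\}\bigr)=1$ because the whole boundary lies in $\{|z|>1\}$. The structural reason is that $e^{-d_D(0,F_t)}$ sees only the single hyperbolically nearest point of $F_t$, while the harmonic measure of the far boundary aggregates the contributions flowing through \emph{all} components of $F_t$; no absolute (or even $D$-dependent, $t$-independent) constant can bridge that. Your own caveat about $F_t$ being an interior set of $D$ correctly identifies where the difficulty sits, but the difficulty is not resolved --- the asserted inequality simply does not hold.

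The repair keeps your skeleton but replaces the harmonic-measure step by a growth estimate plus univalence. If $d_D(0,F_t)\ge q\log t$ for $t\ge R_0$, then for any $w$ with $|f(w)|\ge t$ the segment $[0,w]$ must meet $f^{-1}(F_t)$ at some point $sw$, whence $\log\frac{1+|w|}{1-|w|}\ge d_{\mathbb D}(0,sw)\ge q\log t$ and so $1-|w|\le 2t^{-q}$; equivalently $M_\infty(s,f)\le\bigl(2/(1-s)\bigr)^{1/q}$ near the boundary. For a general holomorphic function this growth bound does not give $H^p$ membership, but for a \emph{univalent} $f$ Prawitz's theorem yields
\[
\frac{1}{2\pi}\int_0^{2\pi}|f(re^{i\theta})|^p\,d\theta\;\le\;p\int_0^r M_\infty(s,f)^p\,\frac{ds}{s},
\]
whose right-hand side is bounded as $r\to1$ precisely when $p<q$. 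This is where univalence enters essentially --- a warning sign in your draft is that the second half never uses it. Alternatively one can run a harmonic-measure argument, but then the correct intermediate quantity is $\omega_D(0,F_t)$ itself (the harmonic measure of the separating arcs, summed over components), not $e^{-d_D(0,F_t)}$, and the passage from hyperbolic distance to that quantity again requires more than the projection theorem.
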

Note that ${d_D}\left( {0,F_r} \right)$ denotes the hyperbolic distance between $0$ and $F_r$ in $D$. For the definition and properties of the hyperbolic distance see \cite{Bea}.

\subsection{Harmonic measure and hyperbolic distance}




Let $D \ne \mathbb{C}$ be a simply connected domain with $0\in D$. Let $E$ be a connected and relatively closed subset of $D$ that is not compact and $0\notin E$. 
Using the Beurling--Nevanlinna projection theorem \cite[p. 105]{Gar}, Poggi-Corradini proved in \cite[pp. 9--10]{Co1} that
\begin{equation}\label{hahy}
\omega _D\left( {0,E} \right) \ge \frac{2}{\pi }{e^{ - {d_D}\left( {0,E} \right)}.}
\end{equation}
Here $\omega _D\left( {0,E} \right)$ denotes the harmonic measure of $E$ at $0$ with respect to the component of $D\backslash E $ that contains $0$. For the definition and more properties of the harmonic measure see \cite{Gar}.

\subsection{An estimate for harmonic measure}

Next, we state an upper estimate for the harmonic measure. For the proof see \cite[pp. 369--376]{Beu} and \cite[pp. 147--149]{Gar}.

\begin{theorem}\label{upperest}
Let $D$ be a simply connected domain with $0\in D$. Also, let $E\subset {D} \cap \{|z|\ge R\}$ be a finite union of arcs. Suppose that $r_0=\dist (0,\partial D) <R$ and $J_r \subset D\cap \{|z|=r\}$ is a set that separates $0$ from $E$ for every $r\in(r_0,R)$. If $r\Theta (r)$ is the length of $J_r$, then
\[\omega _D\left( {0,E} \right) \le \frac{8}{\pi} \exp{\left(-\pi  \int_{r_0}^{R}{\frac{dr}{r\Theta (r)}}\right)}\]
provided that $\Theta (r)$ is measurable.
\end{theorem}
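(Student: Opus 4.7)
The approach is the classical length-area method of Ahlfors and Beurling, adapted to exploit the separating family $\{J_r\}$. The plan has two main steps: a geometric lower bound on a conformal modulus built from $\{J_r\}$, followed by a comparison-to-annulus argument that converts the modulus bound into a harmonic-measure bound with the sharp constant $8/\pi$.

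For the first step I would work in the annular shell $\Omega = D\cap\{r_0<|z|<R\}$, whose inner circle $\{|z|=r_0\}$ lies entirely in $D$ since $r_0=\dist(0,\partial D)$, and consider the family $\Gamma$ of locally rectifiable curves in $\Omega$ joining $\{|z|=r_0\}$ to $E$. The natural admissible metric is the radial one $\rho(z)=1/(|z|\,\Theta(|z|))$, suitably localized to a tube around $\bigcup_{r\in(r_0,R)} J_r$ so that the length and area estimates depend only on $\Theta(r)$ rather than on the angular measure of the full slice. By the separation hypothesis every $\gamma\in\Gamma$ must meet $J_r$ for each $r$; combined with $\lvert d\lvert z\rvert\rvert\le\lvert dz\rvert$ this forces
\[
L_\rho(\gamma)\;\ge\;\int_{r_0}^{R}\frac{dr}{r\,\Theta(r)}\qquad\text{and}\qquad A(\rho)\;\le\;\int_{r_0}^{R}\frac{dr}{r\,\Theta(r)},
\]
giving the extremal-distance bound $\lambda(\Gamma)\ge\int_{r_0}^{R}dr/(r\Theta(r))$. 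Measurability of $\Theta$ is what is needed to make these integrals well defined and $\rho$ admissible.

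For the second step I would invoke Beurling's projection theorem: radial projection of $E$ to a symmetric slit configuration only decreases $\omega_D(0,E)$, and on the resulting extremal configuration the harmonic measure can be computed explicitly via the conformal map onto a round annulus, yielding the sharp inequality $\omega_D(0,E)\le\frac{8}{\pi}\,e^{-\pi\lambda(\Gamma)}$. Combined with the first step this gives the theorem. The main obstacle is this second step: the constant $8/\pi$ is not a soft factor but encodes the explicit harmonic measure of the excluded arc in the extremal slit-annulus configuration, so pinning it down requires careful bookkeeping that pairs the projection theorem with the round-annulus model. The first step is largely routine once the admissible metric is chosen; the main subtlety is that $J_r$ may be a proper subarc of the full slice $D\cap\{|z|=r\}$, which is exactly why the metric has to be supported on a tube around $\bigcup_r J_r$ rather than on all of $\Omega$.
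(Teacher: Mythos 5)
First, a point of reference: the paper does not prove this statement at all --- it is quoted as a known estimate with pointers to Beurling's collected works and to Garnett--Marshall --- so there is no in-paper argument to measure your proposal against. Your first step is the classical length--area computation behind that estimate and is essentially complete: with $\rho=\frac{1}{|z|\Theta(|z|)}$ supported on $\bigcup_{r} J_r$, the separation hypothesis forces every curve from $\{|z|=r_0\}$ to $E$ to meet each $J_r$, the coarea inequality $\int_\gamma \rho\,|dz|\ge\int \rho(r)\,\#(\gamma\cap J_r)\,dr$ gives the length bound, and the area integral of $\rho^2$ evaluates to the same quantity, so $\lambda(\Gamma)\ge\int_{r_0}^{R}\frac{dr}{r\Theta(r)}$.

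The genuine gap is in your second step, and it is twofold. First, the symmetrization you invoke points the wrong way: Beurling's projection theorem says that projecting $E$ \emph{decreases} harmonic measure, so computing $\omega$ on the projected configuration yields a \emph{lower} bound for $\omega_D(0,E)$ --- that is exactly how the paper's other preliminary estimate $\omega_D(0,E)\ge\frac{2}{\pi}e^{-d_D(0,E)}$ is obtained --- whereas here you need an \emph{upper} bound. What is actually required is the dual statement at the level of extremal length: after mapping conformally onto the disc (so that $\omega_D(0,E)$ becomes the normalized length of a boundary set $E^*$ and $\lambda$ is preserved), one must show that among boundary sets of fixed length a single arc \emph{maximizes} the extremal distance to the image of $\{|z|=r_0\}$, and one must also control that image using the Koebe/Schwarz distortion of the Riemann map at $0$, which is where the hypothesis $r_0=\dist(0,\partial D)$ enters. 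Second, even granting the reduction to a single arc, the constant $8/\pi$ comes from the explicit asymptotics of the extremal distance from a short boundary arc to an interior point (a Teichm\"uller-type modulus computation producing the factor $16$, hence $8/\pi$ after normalizing by $2\pi$); your sketch names this as the main obstacle but does not carry it out. As written, then, the proposal establishes $\lambda(\Gamma)\ge\int_{r_0}^{R}\frac{dr}{r\Theta(r)}$ but not the inequality $\omega_D(0,E)\le\frac{8}{\pi}e^{-\pi\lambda(\Gamma)}$ that the theorem actually asserts, and the bridge you propose between the two is stated in the wrong direction.
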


\section{Proof of Theorem \ref{main}}

First, we prove that if $C$ is a comb domain, then ${\rm h}(C)\geq 1$. We consider the starlike domain 
\[S=\mathbb{C}\backslash \{iy:|y|\ge b_0\}.\]
Since $\alpha_S (t)=\pi$ for every $t>b_0$, by (\ref{starlike}) we derive that ${\rm h}(S)=1$. Also, $C\subset S$ and hence ${\rm h}(C) \ge {\rm h}(S)$, that is, ${\rm h}(C) \ge 1$. 

Next, we prove that for any $p \in [1,+\infty]$, there is a comb domain with Hardy number equal to $p$. We consider three cases.

\vspace{1em}

\noindent
{\bf Case 1:} Suppose that $p\in (1,+\infty)$. Then there is an angle $\theta \in (0,\pi)$ such that $p=\pi/ \theta$. Let $S_\theta$ be the sector domain
\[\{re^{i\phi}:r>0,|\phi|<\theta/2\}\]
and $C$ be the comb domain (see Fig. \ref{cs}) for which $x_n=n$ for every $n\in \mathbb{Z}$ and 
\[b_n  = \bigg\{ \begin{array}{l}
1,\,\,n = 0 \\ 
|n|\tan (\theta /2),\,\,n \in \mathbb{Z} \backslash \{0\} 
\end{array}. \]

\begin{figure}
	\begin{overpic}[width=\linewidth]{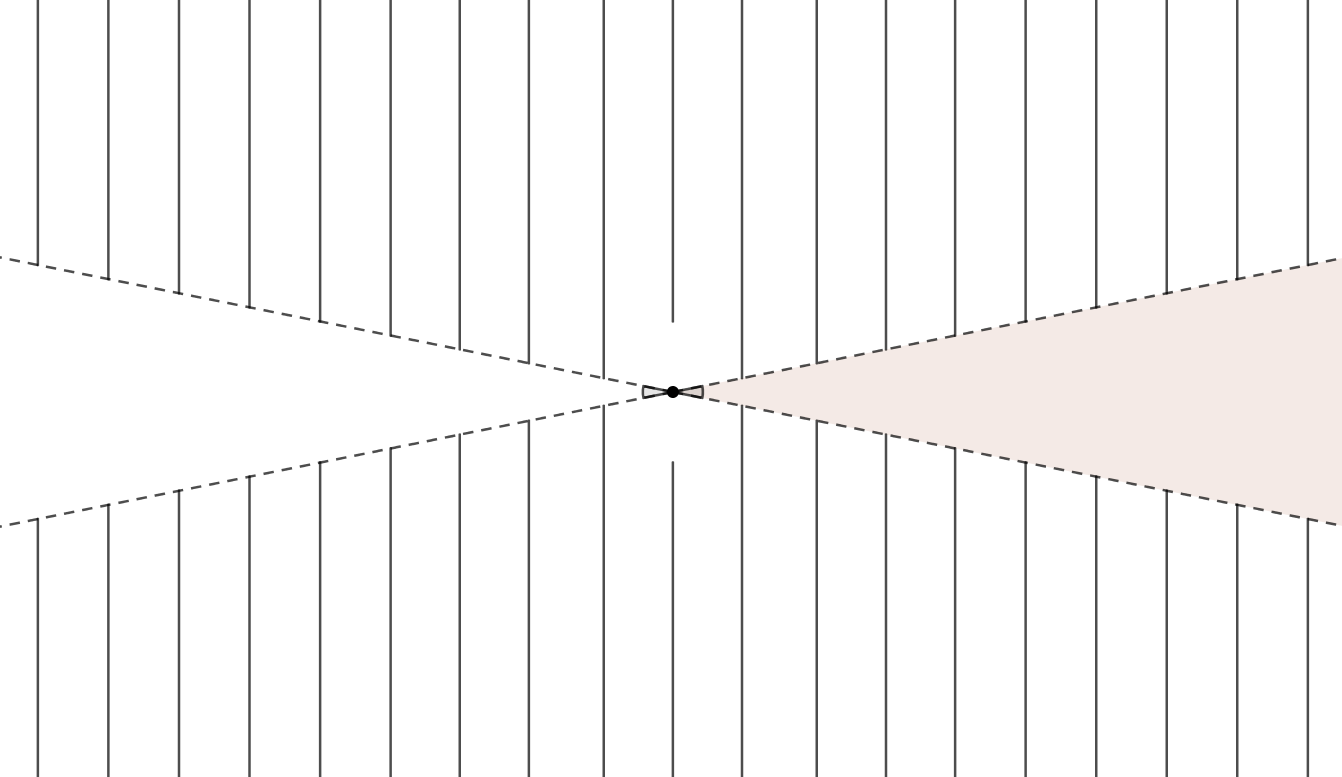}
		\put (49.5,25) {$0$}
		\put (57,27.5) {$\theta$}
		\put (80,27.5) {$S_\theta$}		
	\end{overpic}
	\caption{The comb domain $C$ and the sector $S_\theta$.}
	\label{cs}	
\end{figure}

Since $S_\theta \subset C$ we have ${\rm h} (S_\theta)\ge {\rm h} ( C)$. We know that ${\rm h} (S_\theta)= \pi/\theta$ and thus
\begin{equation}\label{an1}
{\rm h} ( C)\le \frac{\pi}{\theta}.
\end{equation}
Next, we prove the reverse inequality. Let  ${F_r } =C\cap \{|z|=r\}$ for $r>0$. Since we will estimate the Hardy number with the aid of Theorem \ref{hardy}, first we need to find a lower estimate for $d_C (0,F_r )$. For a fixed $r\ge r_0$,  where $r_0>1$ is an absolute constant that we define later, let $F_r^*$ be a component of $F_r$ for which $d_C (0,F_r )=d_C (0,F_r^* )$. By (\ref{hahy}) and Theorem \ref{upperest}, we have
\begin{align}\label{length}
d_C (0,F_r)&=d_C (0,F_r^* ) \ge \log \frac{2}{\pi} + \log \frac{1}{\omega _C (0,F_r^* )}  \nonumber\\
&\ge \log \frac{2}{\pi} + \log \frac{\pi}{8}+ \pi \int_{1}^r {\frac{{dt}}{{t\Phi_r (t)}}} =\log\frac{1}{4}+\pi\int_{1}^r {\frac{{dt}}{{t\Phi_r (t)}}},
\end{align}
where ${t\Phi_r (t)}$ is the length of the component of $C\cap \{|z|=t\}$ that separates $0$ from $F_r^*$. If $J_t$ is the component of $C\cap \{|z|=t\}$ that intersects the real axis, then we will show that that
$$t\Phi_r(t)\leq l(J_t)$$
for $t\in[r_0,r)$. Because of the symmetry of $C$ with respect to the real and the imaginary axis, it suffices to consider the following two cases for $F_r^*$. 

\vspace{1em}

\noindent
\textit{Case I:} Suppose that $F_r^*$ is the component of $F_r$ intersecting the positive real axis.
In this case, the component of $C\cap \{|z|=t\}$ that separates $0$ from $F_r^*$ is $J_t$ (see Fig. \ref{c1}). Therefore, ${t\Phi (t)}=l(J_t)$ for every $t\in [1,r)$. 

\vspace{1em}

\noindent
\textit{Case II:} Suppose that $F_r^*$ is any of the components of $F_r$ that lies in the first quadrant. In this case, there is some maximal point $r^*\geq 1$ (depending on $F_r^*$) such that for $t\in[1,r^*)$ the component of $C\cap \{|z|=t\}$ that separates $0$ from $F_r^*$ is $J_t$ and for $t\in [r^*,r)$ the component of $C\cap \{|z|=t\}$ that separates $0$ from $F_r^*$ is an arc $I_t^r$ whose endpoints lie on the same rays that contain the endpoints of $F_r^*$ (see Fig. \ref{c2}). Therefore,
\begin{equation}\label{angle}
t\Phi_r (t) = \left\{ \begin{array}{l}
l(J_t ),\,\,t\in[1,r^*) \\ 
l(I_t^r ),\,\,t\in [r^*,r) \\ 
\end{array} \right..
\end{equation}
Now, we prove the following lemma.

\begin{figure}
	
	\centering
	\begin{minipage}{0.52\textwidth}
		\centering 
			\begin{overpic}[width=\textwidth]{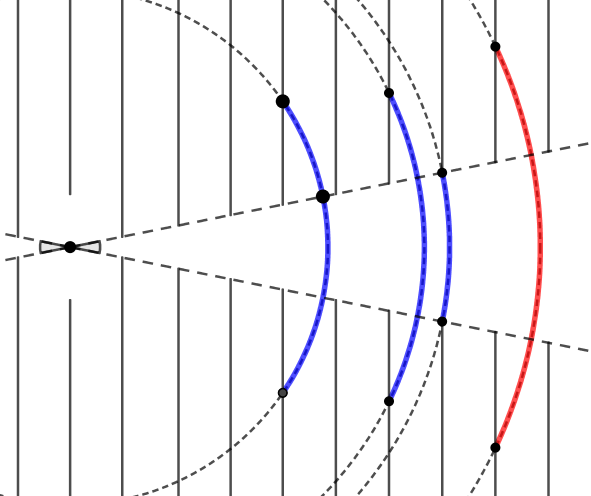}
				\put  (93,40) {\textcolor{red}{$F_r^*$}}
				\put (60,40) {\textcolor{blue}{$J_t$}}
				\put (49,46) {$v_t$}
				\put (41,65) {$s_t$}
				\put (25,40) {$\theta$}
				\put (11,36) {$0$}
			\end{overpic}
		\caption{\textit{Case I}.}
			\label{c1}
	
\end{minipage}
	\begin{minipage}{0.44\textwidth}
	\centering
		\begin{overpic}[width=\textwidth]{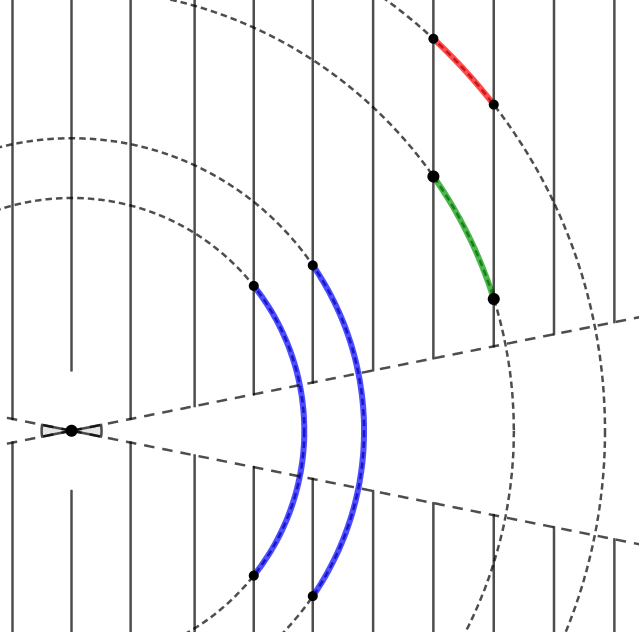}
			\put  (69,93) {\textcolor{red}{$F_r^*$}}
			\put (70,70) {\textcolor{ForestGreen}{$I_t^r$}}
			\put (60,30) {\textcolor{blue}{$J_t$}}
			\put (62,66) {$z_t$}
			\put (70,48) {$w_t$}
			\put (25,29) {$\theta$}
			\put (10,24) {$0$}
		\end{overpic}
		\caption{\textit{Case II}.}
		\label{c2}
	\end{minipage}	
\end{figure}

\begin{lemma}\label{lem1} There is an absolute constant $r_0>1$ such that, for every $t\in [r_0,r)$,
\[l(I_t^r )\le l(J_t ).\]	
\end{lemma}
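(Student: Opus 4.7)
The plan is to compute $l(J_t)$ and $l(I_t^r)$ fairly explicitly in terms of angular coordinates on the circle $\{|z|=t\}$, and then obtain the inequality by a direct comparison of a lower bound for $l(J_t)$ against a uniform upper bound for $l(I_t^r)$.

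First I would observe that since $b_n=|n|\tan(\theta/2)$ for $n\ne 0$, the circle $\{|z|=t\}$ crosses the slit at $x=n$ if and only if $\sqrt{t^2-n^2}\ge b_n$, which simplifies to $|n|\le t\cos(\theta/2)$. Setting $n^*=\lfloor t\cos(\theta/2)\rfloor$, the arc $J_t$ containing the point $(t,0)$ is bounded both above and below by the slit at $x=n^*$, meeting it at the points $(n^*,\pm\sqrt{t^2-(n^*)^2})$. This gives
\[
l(J_t)\;=\;2t\arccos(n^*/t)\;\ge\;2t\arccos(\cos(\theta/2))\;=\;t\theta,
\]
valid as soon as $t\cos(\theta/2)\ge 1$.

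Next, for the arc $I_t^r$, write the two consecutive slits between which it lies as $x=k$ and $x=k+1$, with $k\ge 0$. Its very existence forces the circle of radius $t$ to cross both slits, so $k+1\le t\cos(\theta/2)$. Its endpoints are then $(k,\sqrt{t^2-k^2})$ and $(k+1,\sqrt{t^2-(k+1)^2})$, giving
\[
l(I_t^r)\;=\;t\bigl(\arccos(k/t)-\arccos((k+1)/t)\bigr)\;=\;t\int_k^{k+1}\frac{dx}{\sqrt{t^2-x^2}}.
\]
Because the integrand is increasing on $[k,k+1]$ and $k+1\le t\cos(\theta/2)$, one obtains the uniform bound
\[
l(I_t^r)\;\le\;\frac{t}{\sqrt{t^2-(k+1)^2}}\;\le\;\frac{t}{t\sin(\theta/2)}\;=\;\frac{1}{\sin(\theta/2)}.
\]

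Combining these, $l(I_t^r)$ is bounded by a constant depending only on $\theta$, while $l(J_t)$ grows at least linearly in $t$; thus $l(I_t^r)\le l(J_t)$ holds for every $t$ larger than the absolute constant $r_0:=\max\{\sec(\theta/2)+1,\;1/(\theta\sin(\theta/2))\}$. The main (modest) obstacle is not the estimates themselves but the bookkeeping: identifying correctly which slits of $C$ bound $J_t$ and $I_t^r$ as $t$ varies. Once that geometry is pinned down, the proof reduces to the short calculations above.
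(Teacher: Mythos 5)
Your proof is correct and takes essentially the same route as the paper: both arguments bound $l(I_t^r)$ by the constant $1/\sin(\theta/2)$ using the unit spacing of consecutive slits together with the fact that the endpoints of $I_t^r$ sit at angles between $\theta/2$ and $\pi/2$ (you via the integral representation of $\arccos(k/t)-\arccos((k+1)/t)$, the paper via the mean value theorem applied to $t\cos\phi_2(t)-t\cos\phi_1(t)=1$), and both compare this with the linear lower bound $l(J_t)\ge t\theta$. The only difference is cosmetic: you compute the endpoints of $J_t$ and $I_t^r$ explicitly and exhibit a concrete $r_0$, while the paper works directly with the polar angles of the endpoints.
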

\begin{proof}
Let $z_t$ and $w_t$ be the endpoints of $I_t^r$ (see Fig. \ref{c2}). In polar coordinates, 
\[z_t=te^{i\phi_1(t)}\,\,{\text{and}}\,\,w_t=te^{i\phi_2(t)},\]
where $\theta/2<\phi_2(t)<\phi_1(t)<\pi/2$. Since the distance between two consecutive rays of $C$ is always equal to $1$, we infer that
\[t\cos\phi_2(t)-t\cos\phi_1(t)=1.\]
If $\phi(t)=\phi_1(t)-\phi_2(t)$, the relation above in combination with the mean value theorem implies that
\[0<\phi(t) \le \frac{\cos\phi_2(t)-\cos\phi_1(t)}{c(\theta)}=\frac{1}{tc(\theta)},\]
where $c(\theta)= \sin(\theta/2)$. Hence, $\phi(t)\to 0$ as $t\to +\infty$. So, there is a constant $r_0>1$ such that $\phi(t)\le \theta$ for every $t\in [r_0,r)$. This implies that, for every $t\in [r_0,r)$, 
\[l(I_t^r)=t\phi (t)\le t\theta \le l(J_t)\]
and the proof is complete.
\end{proof}

By (\ref{angle}) and Lemma \ref{lem1}, we deduce that $t\Phi_r(t)\le l(J_t)$ for every $t\in [r_0,r)$. So, in both cases, for every $t\in [r_0,r)$,
\[t\Phi_r(t)\le l(J_t).\] 
If we set $l(J_t)=t\Theta (t)$, by (\ref{length}) it follows that, for every $r>r_0$,
\begin{equation}\label{hypint}
d_C (0,F_r)\ge \log\frac{1}{4}+\pi\int_{r_0}^r {\frac{{dt}}{{t\Theta (t)}}}.
\end{equation}
To complete the proof we need the following lemma.
\begin{lemma}\label{lem2} It is true that
\begin{equation}\nonumber
\lim_{t\to +\infty} \Theta (t)=\theta.
\end{equation}	
\end{lemma}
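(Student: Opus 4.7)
The plan is to identify the arc $J_t$ explicitly, write $\Theta(t)$ in closed form, and pass to the limit. Since $J_t$ is the component of $C\cap\{|z|=t\}$ meeting the real axis, and $C$ is symmetric under reflection about both coordinate axes, I can work with the component containing the positive real point $t$ and determine its upper endpoint by tracing the half-circle $\phi\mapsto te^{i\phi}$ for $\phi$ increasing from $0$ until it first touches a slit.

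For $n\in\mathbb{Z}\setminus\{0\}$, the slit at $x=n$ is $\{(n,y):|y|\ge n\tan(\theta/2)\}$. The circle $\{|z|=t\}$ meets the vertical line $x=n$ at angle $\phi=\arccos(n/t)$, at height $\sqrt{t^2-n^2}$, and this meeting point lies on the slit precisely when $\sqrt{t^2-n^2}\ge n\tan(\theta/2)$, i.e.\ when $n\le t\cos(\theta/2)$. Hence the slits that actually obstruct the circle are those with $|n|\le t\cos(\theta/2)$, and the first one encountered from $\phi=0$ (smallest angle) is the slit at the integer $n^{*}:=\lfloor t\cos(\theta/2)\rfloor$, reached at angle $\phi^{*}(t):=\arccos(n^{*}/t)$. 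By symmetry of $C$ about the real axis, the lower endpoint of $J_t$ lies at angle $-\phi^{*}(t)$, so
\[
\Theta(t)=2\arccos\!\bigl(n^{*}/t\bigr).
\]

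Since $|n^{*}-t\cos(\theta/2)|<1$, the ratio $n^{*}/t$ tends to $\cos(\theta/2)$ as $t\to+\infty$, and continuity of $\arccos$ then gives $\Theta(t)\to 2\arccos(\cos(\theta/2))=\theta$. I do not foresee a serious obstacle; the one detail worth recording explicitly is that slits at integers $m$ with $m>t\cos(\theta/2)$ are genuinely skipped by the circle, because their tips $(m,\pm m\tan(\theta/2))$ sit outside $\{|z|\le t\}$, so the circle crosses the line $x=m$ at heights $|y|<m\tan(\theta/2)$, strictly below the slit, and therefore such $m$ cannot contribute an endpoint of $J_t$.
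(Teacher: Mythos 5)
Your proof is correct and follows essentially the same route as the paper: both arguments rest on the fact that the endpoint of $J_t$ lies on a slit whose abscissa is within distance $1$ of $t\cos(\theta/2)$, so that $\cos(\Theta(t)/2)\to\cos(\theta/2)$. You simply make the identification of that slit explicit as $n^{*}=\lfloor t\cos(\theta/2)\rfloor$ (and verify that slits with $m>t\cos(\theta/2)$ are missed by the circle), where the paper compresses this into the statement $t\cos(\Theta(t)/2)+c(t)=t\cos(\theta/2)$ with $c(t)\in(0,1]$.
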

 \begin{proof} For $t>1$, if $s_t$ is the endpoint of the arc $J_t$ that lies in the first quadrant and $v_t$ is the intersection of $J_t$ with the line $y=\tan (\theta/2)x$ (see Fig. \ref{c1}), then
 	\[s_t=t\cos (\Theta(t)/2)+it\sin (\Theta(t)/2)\,\,{\text{and}}\,\,v_t=t\cos (\theta/2)+it\sin (\theta/2).\]
Since the distance between two consecutive  rays of $C$ is always equal to 1, we infer that
\[t\cos(\Theta(t)/2)+c(t)=t\cos(\theta/2),\]
where $c(t)\in(0,1]$. Therefore, $\cos(\Theta(t)/2) \to \cos(\theta/2)$ as $t\to+\infty$. Since $\theta/2\in (0,\pi/2)$ and $\Theta(t)/2\in [\theta/2,\pi/2)$ for every $t>1$, we have
\[\lim_{t\to +\infty} \Theta (t)=\theta\]
and the proof is complete.
\end{proof}
Fix an $\varepsilon  > 0$. By Lemma \ref{lem2} there is a $t_0>r_0$ (depending on $\varepsilon$) such that for every $t\ge t_0$,
\[|\Theta (t)-\theta|<\varepsilon\]
or
\[\frac{1}{\Theta (t)}>\frac{1}{\theta+\varepsilon}.\]
So, for $r>t_0$, we have
\begin{equation}\nonumber 
\int_{r_0}^r{\frac{{dt}}{{t\Theta (t)}}}\ge\int_{t_0}^r{\frac{{dt}}{{t(\theta+\varepsilon)}}}=\frac{1}{\theta+\varepsilon}\left( {\log r - \log t_0 } \right).
\end{equation}
Combining this with Theorem \ref{hardy} and (\ref{hypint}), we obtain
\begin{align}
{\rm h}(C)=\liminf_{r\to +\infty} \frac{d_C (0,F_r)}{\log r}\ge \pi \liminf_{r\to +\infty}\left(\frac{1}{\log r}\int_{r_0}^r{\frac{{dt}}{{t\Theta (t)}}}\right) \ge \frac{\pi}{\theta+\varepsilon}. \nonumber
\end{align} 
Therefore,
\[{\rm h}(C)\ge \frac{\pi}{\theta+\varepsilon}.\]
Letting $\varepsilon \to 0$ we have
\[{\rm h}(C)\ge \frac{\pi}{\theta}.\]
This in conjuction with (\ref{an1}) gives
\[{\rm h}(C)=\frac{\pi}{\theta}=p.\]
Hence, for every $p\in (1,+\infty)$, there is a comb domain $C$ with ${\rm h}(C)=p$.

\vspace{1em}

\noindent
{\bf Case 2:} Suppose that $p=1$. We consider the comb domain $C$ for which $x_n=n$ and $b_n=n^2+1$ for every $n\in \mathbb{Z}$ (see Fig. \ref{c3}). Fix an $\varepsilon \in (0,\pi)$. There is a sector $S_\varepsilon \subset C$ with angular opening equal to $\pi-\varepsilon$. Indeed, for $\alpha:=\tan((\pi-\varepsilon)/2)>0$ there is a natural number $n>(\alpha^2-4)/4\alpha$ for which the functions $y=x^2+1$ and $y=\alpha (x-n)$ (and their symmetric functions with respect to the real axis) have no common points. Thus, the sector $S_\varepsilon$ bounded by the ray $y=\alpha (x-n)$ for $x\ge n$ and its symmetric with respect to the real axis is contained in $C$ (see Fig. \ref{c3}). Moreover, the Hardy number of $S_\varepsilon$ is equal to $\pi/(\pi-\varepsilon)$. Since $S_\varepsilon \subset C$, we have ${\rm h}(C)\le {\rm h}(S_\varepsilon)$, or equivalently,
\[{\rm h}(C)\le \frac{\pi}{\pi-\varepsilon}.\]
Letting $\varepsilon \to 0$, we obtain ${\rm h}(C) \le1$. Recall that ${\rm h}(C)\ge 1$ (which holds for all comb domains). Hence, we have ${\rm h}(C)=1$.

\begin{figure}
	\begin{overpic}[width=\linewidth]{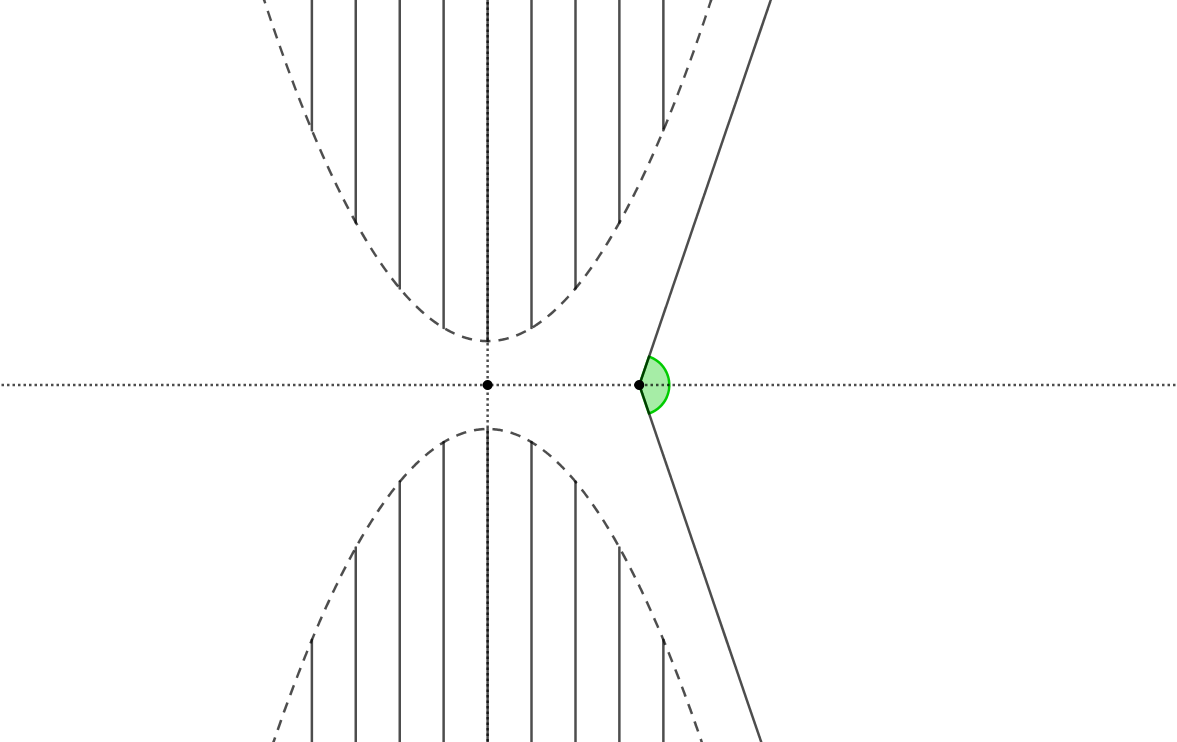}
		\put (39.5,28) {$0$}
		\put (52,28) {$n$}
		\put (70,37) {$S_\varepsilon$}
		\put (57,32) {$\pi-\varepsilon$}
		\put (64,55) {$y=\alpha(x-n)$}
		\put (63,8) {$y=-\alpha(x-n)$}		
	\end{overpic}
	\caption{The comb domain $C$ and the sector $S_\varepsilon$.}
	\label{c3}	
\end{figure} 

\vspace{1em}

\noindent 
{\textit {Remark:}} We note that Theorem \ref{main} does not answer entirely Question \ref{que1} in case $q=1/2$ and $p<q$. However, the domain $C$ constructed here covers this case too. The Riemann mapping of $C$ is sector-like (see \cite[pp. 38--39]{Co1} for the definition and properties) and thus it does not belong to $H^1 (\mathbb{D})$. However, it belongs to $H^{2p} (\mathbb{D})$ for any $p<1/2$, because the Hardy number of comb domains is always greater than or equal to $1$. This means that $C$ has finite $p$-th moment but infinite $1/2$-th moment.

\vspace{1em}

\noindent
{\bf Case 3:} Suppose that $p=+\infty$. The domain $C$ with $x_n=n$ and $b_n=1$ for every $n\in\mathbb{Z}$ has Hardy number equal to infinity. This has already been proved in \cite{Kara}.
\qed

\begin{bibdiv}
\begin{biblist}

\bib{Bea}{article}{
	title={The hyperbolic metric and geometric function theory},
	author={A.F. Beardon and D. Minda,},
	journal={Quasiconformal mappings and their applications},
	date={2007},
	pages={9--56}
}

\bib{Beu}{book}{
	title={The Collected Works of Arne Beurling},
	subtitle={Vol. 1, Complex Analysis},
	author={A. Beurling},
	date={1989},
	publisher={Birkh\"{a}user},
	address={Boston}
}

\bib{Bou}{article}{
	title={On the finiteness of moments of the exit time of planar Brownian motion from comb domains},
	author={M. Boudabra and G. Markowsky},
	journal={Ann. Acad. Sci. Fenn. Math. (to appear)},
	volume={},
	date={},
	pages={arXiv:2101.06895 }
}
\bib{Bur}{article}{
	title={Exit times of Brownian motion, harmonic majorization, and Hardy spaces},
	author={D. L. Burkholder},
	journal={Advances in Mathematics},
	volume={26},
	date={1977},
	pages={182--205}
}
\bib{Dur}{book}{
	title={Theory of $H^p$ Spaces},
	author={P.L. Duren},
	date={1970},
	publisher={Academic Press},
	address={New York-London}
}

\bib{Gar}{book}{
	title={Harmonic Measure},
	author={J.B. Garnett and D.E. Marshall},
	date={2005},
	publisher={Cambridge University Press},
	address={Cambridge}
}

\bib{Han1}{article}{
	title={Hardy classes and ranges of functions},
	author={L.J. Hansen},
	journal={Michigan Math. J.},
	volume={17},
	date={1970},
	pages={235--248}
}
\bib{Han2}{article}{
	title={The Hardy class of a spiral-like function},
	author={L.J. Hansen},
	journal={Michigan Math. J.},
	volume={18},
	date={1971},
	pages={279--282}
}

\bib{Kara}{article}{
	title={On the Hardy number of comb domains},
	author={C. Karafyllia},
	journal={(preprint) arXiv:2101.10477 }
	volume={},
	date={},
	pages={}
}

\bib{Kar}{article}{
	title={On the Hardy number of a domain in terms of harmonic measure and hyperbolic distance},
	author={C. Karafyllia},
	journal={Ark. Mat.},
	volume={58},
	date={2020},
	pages={307--331}

}

\bib{Kim}{article}{
	title={Hardy spaces and unbounded quasidisks},
	author={Y.C. Kim and T. Sugawa},
	journal={Ann. Acad. Sci. Fenn. Math.},
	volume={36},
	date={2011},
	pages={291--300}
}

\bib{Co2}{article}{
	title={The Hardy class of K{\oe}nigs maps},
	author={P. Poggi-Corradini},
	journal={Michigan Math. J.},
	volume={44},
	date={1997},
	pages={495--507}
}
\bib{Co1}{article}{
	title={Geometric models, iteration and composition operators},
	author={P. Poggi-Corradini},
	journal={Ph.D. Thesis, University of Washington},
	date={1996}
}

\end{biblist}
\end{bibdiv}

\end{document}